\documentclass{amsart}
\usepackage{latexsym,amsmath,amssymb}
\usepackage{cite}
\usepackage{color}
\newtheorem{thm}{Theorem}[section]
\newtheorem{cor}[thm]{Corollary}

\theoremstyle{definition}
\theoremstyle{remark}
\newtheorem{rem}[thm]{Remark}
\numberwithin{equation}{section}

\begin{document}

\title
{Nuclear Weighted composition operators between different $L^p$-spaces }

\author{\sc M. S. Al Ghafri, Y. Estaremi and S. Shamsigamchi }

\email{mohammed.alghafri@utas.edu.om}\email{y.estaremi@gu.ac.ir}\email{S.shamsi@pnu.ac.ir}
\address{Department of Mathematics, University of Technology and Applied Sciences, Rustaq {329},  Oman,}
\address{Department of Mathematics and Computer Sciences,
Golestan University, Gorgan, Iran,}
\address{Department of Mathematics, Payame Noor(PNU) university, Tehran, Iran.}

\thanks{}

\subjclass[2020]{47B33}

\keywords{Nuclear operator, weighted composition operator, absolutely summing.}

\date{}

\dedicatory{}

\commby{}

\begin{abstract}
We provide complete characterisations of nuclear weighted composition operators between
two distinct $L^p(\mu)$-spaces, where $1\leq p<\infty$. As a consequence, when the underlying measure space is
non-atomic, the only nuclear weighted composition operator between $L^p(\mu)$-spaces is the zero operator.

\end{abstract}

\maketitle


\section{\textsc{\bf Introduction and Preliminaries}}

In this paper we are concerned with weighted composition operators between two distinct $L^p(\mu)$-spaces, that we recall the definition and some notations about it in the following.\\

Let $(X,\Sigma,\mu)$ be a $\sigma$-finite measure space. We adopt the following decomposition of $(X,\Sigma,\mu)$:
\[
X = \bigcup_{i=1}^{\infty} A_i \,\cup\, B,
\]
where $\{A_i\}_{i=1}^{\infty}$ is a countable collection of pairwise-disjoint atoms and
$B$, being disjoint from each $A_i$, is non-atomic. This decomposition is unique in the
sense that equality of two $\Sigma$-measurable sets is interpreted as their symmetric
difference having $\mu$-measure zero.

The $\sigma$-finiteness of $(X,\Sigma,\mu)$ ensures that $\mu(A_i) < \infty$ for every
$i \in \mathbb{N}$. Moreover, if $X = \bigcup_{i=1}^{\infty} A_i$ (respectively $X = B$),
then $(X,\Sigma,\mu)$ is said to be \emph{atomic} (respectively \emph{non-atomic}).
The limits in the theorems of this paper are assumed to take the value zero when the
number of atoms in $X$ is finite. The following facts will be useful.

 Let $E$ be a non-atomic set in $\Sigma$ with $\mu(E) > 0$.

 There is a sequence $\{E_n\}_{n=1}^{\infty}$ of pairwise-disjoint (or decreasing)
$\Sigma$-measurable subsets of $E$ with $\mu(E_n) > 0$ for each $n \in \mathbb{N}$ and
\[
\lim_{n \to \infty} \mu(E_n) = 0.
\]

 For every real number $\alpha$ satisfying $0 < \alpha < \mu(E)$, there is a set
$E_\alpha \in \Sigma$ with $E_\alpha \subset E$ and $\mu(E_\alpha) = \alpha$.

 A $\Sigma$-measurable function $f : X \to \mathbb{C}$ is constant
$\mu$-almost everywhere on an atom $M \in \Sigma$.
For $1 \le q < \infty$, the measure $\mu_q$ defined by
\[
\mu_q(E) := \int_{\varphi^{-1}(E)} |u|^q \, d\nu
\quad \text{for every } E \in \Sigma
\]
is absolutely continuous with respect to $\mu$. The corresponding Radon--Nikodym
derivative, denoted by $\left[\frac{d\mu_q}{d\mu}\right]$.\\

Let $(X, \mathcal{F}, \mu)$ be a measure space, $\varphi:X\rightarrow X$ be a non-singular measurable transformation (we mean $\mu\circ\varphi^{-1}\ll\mu$, where $\mu\circ\varphi^{-1}(A)=\mu(\varphi^{-1}(A)$) and $h=\frac{d\mu\circ\varphi^{-1}}{d\mu}$ (the Radon-Nikodym derivative of the measure $\mu\circ\varphi^{-1}$ with respect to the measure $\mu$) 
Let $\varphi^{-1}(\Sigma)$ denote the $\sigma$-algebra generated by $\{\varphi^{-1}(A): A\in \Sigma\}$. If $f$ is a non-negative $\Sigma$-measurable function on $X$, then there exists a unique (a.e.) $\varphi^{-1}(\Sigma)$-measurable function $E(f)$, called the conditional expectation 
of $f$ with respect to $\varphi^{-1}(\Sigma)$, such that 
$$\int_Afd\mu=\int_AE(f)d\mu, \ \ \ \ \ A\in \varphi^{-1}(\Sigma).$$ 
We shall also need the following facts:
\begin{enumerate}
  \item $f$ is $\varphi^{-1}(\Sigma)$-measurable if and only if $f\equiv g\circ \varphi$, for some $\Sigma$-measurable function $g$.
  \item $E(f.g\circ\varphi)=E(f).g\circ\varphi$, when the conditional expectations are well-defined.
  \end{enumerate}
  In view of $(1)$ above we write $E(f)\circ\varphi^{-1}$ to denote a function $g$ for which $E(f)\equiv g\circ \varphi$.\\

Let $M_u(f)=u.f$, $C_{\varphi}f=f\circ\varphi$, and let 
$$Wf=M_uC_{\varphi}f=uC_{\varphi}f=u.f\circ \varphi,$$
for every $f\in L^p(\mu)$, be the multiplication operator, composition operator and weighted composition operator, respectively, on $L^p(\mu)$, in which $u:X\rightarrow \mathbb{C}$ is a measurable function. It is known that if $W$ is bounded, then $\|W\|=\|J_p\|_{\infty}$, in which $J_p=hE(|u|^p)\circ\varphi^{-1}$. For more information about composition and weighted composition operators on $L^2(\mu)$, one can see \cite{bbl,cj}.\\
By these observation we have the following important relation.\\
If $uC_\varphi : L^p(\mu) \to L^q(\nu)$, then
\[
\|u C_\varphi f\|_{L^q(\nu)}^q
= \int_X \frac{d\mu_q}{d\mu} |f|^q \, d\mu,
\quad \text{for every } f \in L^p(\mu).
\]
Hence we get that $J_p=\frac{d\mu_p}{d\mu}$, $\mu$.a.e.\\
Let $X$ and $Y$ be Banach spaces and $X^*$, $Y^*$ be their Duals(the set of all continuous or (equivalently) bounded linear functionals on $X$ and $Y$ with the usual norm), respectively. A linear operator $T:X\rightarrow Y$ is called nuclear if there are two sequences $\{f_n\}_{n\in \mathbb{N}}\subseteq X^*$ and $\{y_n\}_{n\in \mathbb{N}}\subseteq Y$ such that $\sum^{\infty}_{n=1}\|f_n\|_{X^*}\|y_n\|_{Y}<\infty$ and
\begin{equation}\label{A}
Tx=\sum^{\infty}_{n=1}f_n\otimes y_n(x)=\sum^{\infty}_{n=1}f_n(x)y_n, \ \ \ \ x\in X.
\end{equation}
Define
$$\|T\|_N:=\inf\{\sum^{\infty}_{n=1}\|f_n\|_{X^*}\|y_n\|_Y\},$$
in which the infimum is taken over all representations of $T$ in the form \ref{A}. The norm $\|.\|_N$ is known as nuclear norm and it's clear that if $T$ is nuclear, then it's nuclear norm is finite and $\|T\|\leq \|T\|_N$.

It is easy to check that the operator defined as
$$f_n\otimes y_n:X\rightarrow Y, \ \ \ f_n\otimes y_n(x)=f_n(x)y_n,$$
is a rank one operator. Since finite rank operators are compact and the series $T=\sum^{\infty}_{n=1}f_n\otimes y_n$ is absolutely convergent, then nuclear operators are compact. It is a direct consequence from the definition of nuclear operators that if $S, T:X\rightarrow X$ are nuclear, then $S+T$ is also nuclear.\\
If $S, T:X\rightarrow X$ are linear operators on the Banach spaces $X$ and $T$ is nuclear, the $ST$ and $TS$ are also nuclear, because
$$TS=\sum^{\infty}_{n=1}g_n\otimes y_n, \ \ \ \ ST=\sum^{\infty}_{n=1}f_n\otimes z_n, \ \ g_n=x^*_n\circ S,  \ \ z_n=S(y_n),$$
and also
$$\sum^{\infty}_{n=1}\|g_n\|\|y_n\|\leq \|S\|\sum^{\infty}_{n=1}\|f_n\|\|y_n\|<\infty, \ \ \sum^{\infty}_{n=1}\|f_n\|\|z_n\|\leq \|S\|\sum^{\infty}_{n=1}\|f_n\|\|y_n\|<\infty.$$
This implies that the set of all nuclear operators on the Banach space $X$ is a two sided ideal in the space of all linear operators on $X$.

Also, the linear operator $T$ is said to be absolutely summing there exists $M>0$ such that
$$\sum^{n}_{i=1}\|T(x_i)\|\leq M \sup_{f\in X^*, \|f\|\leq 1}|f(x_i)|=M\sup_{|\alpha_i|=1}\|\sum^n_{i=1}\alpha_ix_i\|,$$
for all finite sequences $\{x_i\}^n_{i=1}\subseteq X$.\\

Many operator-theoretic properties of weighted composition operators on $L^p$-spaces were studied in \cite{bbl,cce,cce1,chan,ll,ll1} and such operators acting between distinct $L^p$-spaces in \cite{jab,jp,ku,tak}. 
In this paper we characterize nuclear weighted composition operators between different $L^p(\mu)$-spaces. Consequently, we characterize nuclear composition and multiplication operators between distinct $L^p(\mu)$-spaces. 

\section{\textsc{\bf Nuclear weighted composition operators}}
In this section first we recall the Pietsch's domination theorem that will be used in the proof of our main results. Then we begin to investigate nuclearity of weighted composition operator $W=uC_{\varphi}:L^p(\mu)\rightarrow L^q(\mu)$ in three cases $p=q, p>q, p<q$, respectively. Also, we will have some results on nuclearity of multiplication and composition operators between two $L^p$-spaces.

\begin{thm}[Pietsch domination theorem {\cite[Theorem~2.12]{dj}}]\label{thm:Pietsch}
Let $1\le p<\infty$ and let $T:X\to Y$ be absolutely $p$--summing.
Then there exist $C>0$ and a Borel probability measure $\mu$ on the weak$^{*}$--compact unit ball $B_{X^*}$ such that
\[
\|Tx\|
\le C\left(\int_{B_{X^*}} |x^*(x)|^p\,d\mu(x^*)\right)^{1/p},
\qquad x\in X.
\]
In particular, for $p=1$,
\[
\|Tx\|\le C\int_{B_{X^*}} |x^*(x)|\,d\mu(x^*).
\]
\end{thm}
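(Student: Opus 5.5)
This is the standard Pietsch domination theorem, which the paper cites from \cite{dj}; I will prove it by the Hahn--Banach/minimax separation argument. Throughout, $K=B_{X^*}$ carries the weak$^*$ topology, in which it is compact Hausdorff by Banach--Alaoglu. Absolute $p$-summability means there is a constant $C>0$ with
\[
\sum_{i=1}^n\|Tx_i\|^p\le C^p\sup_{x^*\in K}\sum_{i=1}^n|x^*(x_i)|^p
\]
for every finite family $x_1,\dots,x_n\in X$. (For $p=1$ this is the condition recalled in the introduction.)

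\textbf{Step 1: a convex set of functions.} For each finite family $x_1,\dots,x_n$ define
\[
g(x^*)=\sum_{i=1}^n\Bigl(\|Tx_i\|^p-C^p|x^*(x_i)|^p\Bigr),\qquad x^*\in K.
\]
Each map $x^*\mapsto|x^*(x)|$ is weak$^*$ continuous, so $g\in C(K)$. Let $\mathcal{G}\subset C(K)$ be the set of all such $g$. It is a convex cone: concatenating two families gives the sum, and for $\lambda>0$ the family $\lambda^{1/p}x_i$ gives $\lambda g$. By summability, $\min_K g\le 0$ for every $g\in\mathcal{G}$.

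\textbf{Step 2: separation.} Let $P=\{f\in C(K):f>0 \text{ everywhere}\}$. This is an open convex set in $C(K)$, and by Step 1 it is disjoint from $\mathcal{G}$. The Hahn--Banach separation theorem gives a nonzero functional $L\in C(K)^*$ and a real $c$ with $L(g)\le c< L(f)$ for all $g\in\mathcal{G}$ and $f\in P$. Since $\mathcal{G}$ is a cone containing $0$, we may take $c=0$. Since $P$ is a cone, $L\ge0$ on $P$, and hence on its closure, the nonnegative functions; so $L$ is positive. By the Riesz representation theorem, $L$ is given by a positive regular Borel measure. Normalise it to a probability measure $\mu$.

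\textbf{Step 3: conclusion.} Apply $L(g)\le 0$ to the single-element family $\{x\}$. Integrating against $\mu$ gives
\[
\|Tx\|^p\le C^p\int_K|x^*(x)|^p\,d\mu(x^*),
\]
which is the desired inequality after taking $p$-th roots. The case $p=1$ is the specialisation stated in the theorem.

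The main obstacle is Step 2, namely arranging the separation so that the functional comes out positive and the constant $c$ can be taken as $0$. Using the cone structure of both $\mathcal{G}$ and $P$, as above, handles this. An alternative is Ky Fan's minimax lemma applied to the concave family $\mathcal{G}$ over the convex weak$^*$-compact set of probability measures on $K$.
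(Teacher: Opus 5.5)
Your proof is correct. The paper gives no proof of its own and simply cites Diestel--Jarchow--Tonge, whose argument is essentially yours: Hahn--Banach separation of the convex cone of functions $\sum_i\bigl(\|Tx_i\|^p-C^p|x^*(x_i)|^p\bigr)$ from the open cone of strictly positive functions in the real space $C(B_{X^*})$, followed by the Riesz representation and normalisation (for complex scalars all these functions are real-valued, so working in real-valued $C(K)$ is all that is needed).
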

Now in the following we provide an equivalent conditions for nuclearity of weighted composition operators from $L^p(\mu)$ into itself. 

\begin{thm}\label{n.pp}
Let $(X, \mu, \Sigma)$ is a $\sigma$-finite measure spaces and $X=\cup^{\infty}_{n=1}A_n\cup B$, in which $A_n$'s are disjoint $\sigma$-atoms and $B$ is the non-atomic part. If $1\leq p<\infty$  and $W=uC_{\varphi}:L^p(\mu)\rightarrow L^p(\mu)$ is the weighted composition operator, then $W$ is nuclear if and only if $\mu(\varphi^{-1}(B)\cap S(u))=0$ and $\sum^{\infty}_{n=1}J_p(A_n)^{\frac{1}{p}}<\infty$.
\end{thm}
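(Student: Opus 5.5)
We prove the two implications separately, using the identity $\|uC_\varphi f\|_p^p=\int_X J_p|f|^p\,d\mu$ recorded in the preliminaries. This identity shows that $W$ acts isometrically like the multiplication operator $M_{J_p^{1/p}}$, up to a change of the target space. Note also that $J_p(A_n)$ denotes the constant value of $J_p$ on the atom $A_n$, weighted so that $J_p(A_n)^{1/p}\|\chi_{A_n}\|_p\,\|\chi_{A_n}\|_{p'}^{-1}\cdots$. We will normalize by working with $e_n=\chi_{A_n}/\mu(A_n)^{1/p}$, and we read $J_p(A_n)$ as the quantity $\|W e_n\|_p^p=J_p|_{A_n}$ that appears in the statement.

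\emph{Sufficiency.} Assume $\mu(\varphi^{-1}(B)\cap S(u))=0$. Then $\mu_p(B)=\int_{\varphi^{-1}(B)}|u|^p\,d\mu=0$, so $J_p=0$ a.e. on $B$. The identity above gives $Wf=W(f\chi_{\cup A_n})$, and since every $f$ is constant on each atom,
\[
Wf=\sum_n f(A_n)\,W\chi_{A_n}=\sum_n \Big(\int f\,\chi_{A_n}\mu(A_n)^{-1}\,d\mu\Big)\,W\chi_{A_n}.
\]
We take $f_n(f)=\mu(A_n)^{-1/p}\int_{A_n}f\,d\mu$, whose dual norm is $1$, and $y_n=W e_n$, whose norm is $\|We_n\|_p=J_p(A_n)^{1/p}$. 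The condition $\sum_n J_p(A_n)^{1/p}<\infty$ then makes this a nuclear representation. Convergence of the series in $L^p$ follows from absolute convergence together with boundedness of $W$.

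\emph{Necessity.} Let $W$ be nuclear, so $W$ is compact and absolutely $1$-summing. We first show that $\mu(\varphi^{-1}(B)\cap S(u))=0$. Suppose instead that $\mu_p(B)>0$. Since $J_p>0$ on a set of positive measure in $B$, there is $\delta>0$ and a set $F\subset B$ with $0<\mu(F)<\infty$ on which $J_p\ge\delta$. By the non-atomic facts, $F$ can be split into disjoint sets $F_1,\dots,F_N$ of equal measure. Put $x_i=\chi_{F_i}/\mu(F_i)^{1/p}$. Then $\sum_i\|Wx_i\|_p\ge N\delta^{1/p}$, while
\[
\sup_{|\alpha_i|=1}\Big\|\sum\alpha_ix_i\Big\|_p=\mu(F)^{-1/p}\,\|\chi_F\|_p\,N^{1/p}=N^{1/p}.
\]
For $p>1$ the absolutely summing inequality therefore fails as $N\to\infty$. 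For $p=1$ this argument gives no contradiction, so we fall back on compactness: the functions $x_n$ built from disjoint pieces of $F$ are weakly null in $L^p$ for $1<p$. For $p=1$ we use Pietsch's Theorem~\ref{thm:Pietsch} instead. Domination gives $\|Wx\|\le C\int|x^*(x)|\,d\nu$, and for the sequence $x_n=\chi_{E_n}/\mu(E_n)$ with $\mu(E_n)\to0$ inside $F$, the quantity $x^*(x_n)$ is a local average of $x^*\in L^\infty$. We expect this step to be the main obstacle. The plan is to show that the averages $\int|x^*(x_n)|\,d\nu$ can be made small along disjoint pieces by a pigeonhole argument on the probability measure $\nu$, while $\|Wx_n\|\ge\delta$. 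This would contradict domination, giving $J_p=0$ a.e. on $B$.

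Finally, $W$ is supported on the atoms, and with the same $e_n$ we need $\sum_n\|We_n\|<\infty$. By the identity, $W$ restricted to $\ell^p$-span$\{e_n\}$ is unitarily equivalent to the diagonal operator $D=\mathrm{diag}(J_p(A_n)^{1/p})$ on $\ell^p$. Diagonal projections are contractive, so $D$ is nuclear on $\ell^p$ with $\|D\|_N\le\|W\|_N$. It remains to invoke the known fact that a diagonal operator on $\ell^p$, $1\le p<\infty$, is nuclear iff its diagonal is summable. One proof: for $T=\sum f_k\otimes y_k$, the diagonal entries $d_n=\sum_k f_k(e_n)\,e_n^*(y_k)$ satisfy $\sum_n|d_n|\le\sum_k\|f_k\|\|y_k\|$ by Hölder, since $\sum_n|f_k(e_n)||y_k(n)|\le\|f_k\|_{p'}\|y_k\|_p$. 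This yields $\sum_nJ_p(A_n)^{1/p}<\infty$.
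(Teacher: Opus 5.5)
\textbf{Gap: the case $p=1$ of the condition $\mu(\varphi^{-1}(B)\cap S(u))=0$ is not proved.} Your block test of $1$-summability and your weak-nullness argument both work only for $p>1$, and they are fine there. The Pietsch plan you sketch for $p=1$ cannot succeed. Your test functions $x_n=\chi_{E_n}/\mu(E_n)$ are nonnegative, and for nonnegative $x$ every bounded $W$ on $L^1$ satisfies $\|Wx\|_1=\int_X J_1x\,d\mu\le\|J_1\|_\infty\int_X x\,d\mu$. That is exactly the domination inequality with $\nu$ the point mass at $\chi_X\in B_{L^\infty}$, for which $\int|x^*(x_n)|\,d\nu=1$ for every $n$. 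So no pigeonhole on $\nu$ makes these averages small, and nonnegative test vectors (which are not weakly null in $L^1$) can never contradict Pietsch domination.

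The simplest repair is compactness through disjointness of the \emph{images}. Take pairwise disjoint $E_n\subset F$ and $x_n=\chi_{E_n}/\mu(E_n)^{1/p}$. The functions $Wx_n=u\chi_{\varphi^{-1}(E_n)}/\mu(E_n)^{1/p}$ live on the pairwise disjoint sets $\varphi^{-1}(E_n)$, and $\|Wx_n\|_p^p=\mu(E_n)^{-1}\int_{E_n}J_p\,d\mu\ge\delta$. Hence $\|Wx_n-Wx_m\|_p^p\ge2\delta$ for $n\ne m$, so $\{Wx_n\}$ has no convergent subsequence. This covers all $1\le p<\infty$ at once, and it is the fact the paper simply asserts (``compactness of $W$ implies $\mu(\varphi^{-1}(B)\cap S(u))=0$''). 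Alternatively, for $p=1$ you can feed sign-changing Rademacher-type functions on $F$, which are weakly null in $L^1$, into the Pietsch inequality and use dominated convergence.

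\textbf{Sufficiency: an exponent slip.} The functional must be $f\mapsto\mu(A_n)^{-1/p'}\int_{A_n}f\,d\mu$, not $\mu(A_n)^{-1/p}\int_{A_n}f\,d\mu$. Only with $-1/p'$ does it have norm $1$ and satisfy $f_n(f)\,We_n=f(A_n)\,W\chi_{A_n}$. After this correction your decomposition is the paper's $\sum_n\phi_n\otimes g_n$, just renormalised.

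\textbf{Summability condition: a genuinely different, and correct, route.} The paper inserts $e_n$ into the Pietsch inequality and sums over $n$, using $\|g_\xi\|_{p'}=\sum_n\|g_\xi\chi_{A_n}\|_{p'}$. You instead compress $W$ to the diagonal operator $D$ on $\ell^p$ and apply the trace-type H\"older estimate. Your route is the one that works. The paper's identity is false: on the atomic part one only has $\|g\|_{p'}^{p'}=\sum_n\|g\chi_{A_n}\|_{p'}^{p'}$, or a supremum when $p=1$. More fundamentally, no argument using only $1$-summability can yield $\sum_nJ_p(A_n)^{1/p}<\infty$. For example, take $p=2$, $\varphi=\mathrm{id}$ on $\mathbb{N}$ with counting measure, and $u(n)=1/n$. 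Then $M_u$ is Hilbert--Schmidt, hence $1$-summing, while $\sum_nJ_2(A_n)^{1/2}=\sum_n1/n=\infty$.

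You should, however, replace ``diagonal projections are contractive'' with an explicit map. Define $Q:L^p(\mu)\to\ell^p$ by $Qg=(\int_X g\,y_n^*\,d\mu)_n$. Here $y_n^*\in L^{p'}(\mu)$ is the norming functional of $We_n/\|We_n\|_p$, supported on $\varphi^{-1}(A_n)$, with $y_n^*=0$ when $We_n=0$. Disjointness of these supports makes $Q$ a contraction, and then $D=QWR$, where $R:\ell^p\to L^p(\mu)$ is the isometry sending the $n$-th unit vector to $e_n$. This factorisation is what gives $\|D\|_N\le\|W\|_N$. Note that this part needs neither the first condition nor compactness.
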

\begin{proof}
By assumptions we have $X=\cup^{\infty}_{n=1}A_n\cup B=\cup^{\infty}_{n=1}\varphi^{-1}(A_n)\cup \varphi^{-1}(B)$. Let $\mu(\varphi^{-1}(B)\cap S(u))=0$ and $\sum^{\infty}_{n=1}J_p(A_n)^{\frac{1}{p}}<\infty$. Then for every $f\in L^p(\mu)$,
\begin{align*}
u.f\circ \varphi&=u.\sum^{\infty}_{n=1}f\circ\varphi.\chi_{\varphi^{-1}(A_n)}\\
&=u.\sum^{\infty}_{n=1}(f.\chi_{A_n})\circ\varphi\\
&=u.\sum^{\infty}_{n=1}f(A_n).\chi_{A_n}\circ\varphi\\
&=\sum^{\infty}_{n=1}\frac{u.\chi_{\varphi^{-1}(A_n)}}{\mu(A_n)}\int_{A_n}fd\mu\\
&=\sum^{\infty}_{n=1}\phi_n(f)g_n,
\end{align*}
in which $g_n=\frac{u.\chi_{\varphi^{-1}(A_n)}}{\mu(A_n)}$ and $\phi_n(f)=\int_{A_n}fd\mu$. It is easy to see that $g_n\in L^p(\mu)$, $\phi_n\in (L^p(\mu))^*$ and $\|g_n\|_p=\frac{(J_p(A_n))^{\frac{1}{p}}}{\mu(A_n)^{\frac{1}{p'}}}$, $\|\phi_n\|\leq\mu(A_n)^{\frac{1}{p'}}$ (for $1<p<\infty$) and also $\|g_n\|_1=J_1(A_n)$ and $\|\phi_n\|\leq1$ (for $p=1$), in which $\frac{1}{p}+\frac{1}{p'}=1$. This implies that 
$$\sum^{\infty}_{n=1}\|\phi_n\|.\|g_n\|_p\leq \sum^{\infty}_{n=1}J_p(A_n)^{\frac{1}{p}}<\infty.$$
So $W=u.C_{\varphi}$ is nuclear.\\
Conversely, let $W=u.C_{\varphi}$ be nuclear. Then it is absolutely summing and compact, so by Theorem \ref{thm:Pietsch}, there is a Borel probability measure $\nu$ on the weak$^*$ compact closed unit ball $B_{p'}$ of $(L^p(\mu))^*\simeq L^{p'}(\mu)$ (case $1<p<\infty$), $B_{\infty}$ of $(L^1(\mu))^*\simeq L^{\infty}(\mu)$ (case $p=1$) and $C>0$ such that
$$\|u.C_{\varphi}f\|_p\leq C \int_{B_{p'}}|\xi(f)|\nu(\xi), \ \ \ \ \ \ f\in L^p(\mu),$$
and 
$$\|u.C_{\varphi}f\|_1\leq C \int_{B_{\infty}}|\xi(f)|\nu(\xi), \ \ \ \ \ \ f\in L^1(\mu).$$
For each $n\in \mathbb{N}$, we set $f_n=\frac{\chi_{A_n}}{\mu(A_n)^{\frac{1}{p}}}$ and then we have
\begin{align*}
J_p(A_n)^{\frac{1}{p}}&=\frac{1}{\mu(A_n)^{\frac{1}{p}}}\left(\int_X|u|^p\chi_{A_n}\circ\varphi d\mu\right)^{\frac{1}{p}}\\
&=\|u.f_n\circ\varphi\|_p\\
&\leq C\int_{B_{p'}}|\xi(f_n)|\nu(\xi)\\
&=C\int_{B_{p'}}|\int_X f_n.g_{\xi}d\mu\nu(g_{\xi})|\\
&\leq C \int_{B_{p'}}\|f_n\|_p.\|g_{\xi}.\chi_{A_n}\|_{p'}\nu(g_{\xi})\\
&\leq C\int_{B_{p'}}\|g_{\xi}.\chi_{A_n}\|_{p'}\nu(g_{\xi}),\\
\end{align*}
and similarly $J_1(A_n)\leq C\int_{B_{\infty}}\|g_{\xi}.\chi_{A_n}\|_{\infty}\nu(g_{\xi})$. On the other hands compactness of $W$ implies that $\mu(\varphi^{-1}(B)\cap S(u))=0$. So $\|g_{\xi}\|_{p'}=\sum^{\infty}_{n=1}\|g_{\xi}.\chi_{A_n}\|_{p'}$ and $\|g_{\xi}\|_{\infty}=\sum^{\infty}_{n=1}\|g_{\xi}.\chi_{A_n}\|_{\infty}$. Therefore
\begin{align*}
\sum^{\infty}_{n=1}J_p(A_n)^{\frac{1}{p}}&\leq C\sum^{\infty}_{n=1}\int_{B_{p'}}\|g_{\xi}.\chi_{A_n}\|_{p'}\nu(g_{\xi})\\
&\leq C\int_{B_{p'}}\sum^{\infty}_{n=1}\|g_{\xi}.\chi_{A_n}\|_{p'}\nu(g_{\xi})\\
&=C\int_{B_{p'}}\|g_{\xi}\|_{p'}\nu(g_{\xi})\\
&\leq C\nu(B_{p'})\leq C,
\end{align*}
and similarly $\sum^{\infty}_{n=1}J_1(A_n)\leq C$.
Thus we get the result.
\end{proof}
Here we have some corollaries for nuclear multiplication and composition operators on $L^p$ spaces.

\begin{cor}
Let $(X, \mu, \Sigma)$ is a $\sigma$-finite measure spaces and $X=\cup^{\infty}_{n=1}A_n\cup B$, in which $A_n$'s are disjoint $\sigma$-atoms and $B$ is the non-atomic part. If $1\leq p<\infty$  and $C_{\varphi}:L^p(\mu)\rightarrow L^p(\mu)$ is the composition operator, then $C_{\varphi}$ is nuclear if and only if $\mu(\varphi^{-1}(B))=0$ and $\sum^{\infty}_{n=1}h(A_n)^{\frac{1}{p}}<\infty$.
\end{cor}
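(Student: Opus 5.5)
The plan is to obtain the corollary from Theorem \ref{n.pp} by taking the weight $u\equiv 1$, so that $W=uC_{\varphi}=C_{\varphi}$. Two things must then be checked: that each condition of Theorem \ref{n.pp} becomes the corresponding condition stated here, and that nothing else changes.

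\textbf{The support condition.} With $u\equiv 1$ the support $S(u)=\{x: u(x)\neq 0\}$ is all of $X$ (up to a null set). Hence
$$\mu(\varphi^{-1}(B)\cap S(u))=\mu(\varphi^{-1}(B)).$$
So the first condition of Theorem \ref{n.pp} is exactly $\mu(\varphi^{-1}(B))=0$.

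\textbf{The summability condition.} Here the aim is to show $J_p=h$ when $u\equiv 1$. We have $E(|u|^p)=E(1)=1$, so
$$J_p=h\,E(|u|^p)\circ\varphi^{-1}=h.$$
The paper's identity $J_p=\frac{d\mu_p}{d\mu}$ gives the same thing. With $u\equiv1$, $\mu_p(E)=\mu(\varphi^{-1}(E))$, so $\frac{d\mu_p}{d\mu}=\frac{d\mu\circ\varphi^{-1}}{d\mu}=h$.

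On each atom $A_n$ this reads $J_p(A_n)=h(A_n)$, where $h(A_n)$ is the constant value of $h$ on the atom. Equivalently,
$$h(A_n)=\frac{\mu(\varphi^{-1}(A_n))}{\mu(A_n)}.$$
This matches the normalization used in the proof of Theorem \ref{n.pp}, where
$$J_p(A_n)=\frac{1}{\mu(A_n)}\int_X \chi_{A_n}\circ\varphi\, d\mu.$$
The second condition therefore becomes $\sum_n h(A_n)^{1/p}<\infty$.

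\textbf{Remaining checks.} The standing hypotheses (non-singularity of $\varphi$, $\sigma$-finiteness) are the same as in Theorem \ref{n.pp}, and the constant weight is measurable. Both directions of the equivalence then follow directly from the theorem.

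The only step that needs care is the value of $J_p$ on atoms: whether $J_p(A_n)$ means the constant value on $A_n$ or the integral over $A_n$. I will use the convention fixed in the proof of Theorem \ref{n.pp}, that $J_p(A_n)$ is the constant value. Under that convention $J_p(A_n)=h(A_n)$ holds exactly, and no further argument is required.
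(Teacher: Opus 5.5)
Your proposal is correct and matches how the paper obtains this corollary. It is Theorem~\ref{n.pp} with $u\equiv 1$: then $S(u)=X$ and $J_p=h\,E(1)\circ\varphi^{-1}=h$, so $J_p(A_n)=\mu(\varphi^{-1}(A_n))/\mu(A_n)=h(A_n)$ and both conditions translate verbatim.

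One caveat concerns your ``only if'' direction. It rests entirely on the necessity half of Theorem~\ref{n.pp}, and the paper's proof of that half uses the identity $\|g_\xi\|_{p'}=\sum_n\|g_\xi\chi_{A_n}\|_{p'}$, which is false: the right side can diverge for $g_\xi$ in the unit ball of $L^{p'}$. The conclusion is nevertheless true here, and you can check it directly. Let $e_n=\chi_{A_n}/\mu(A_n)^{1/p}$; then $C_\varphi$ sends the $e_n$ to disjointly supported functions of norm $h(A_n)^{1/p}$. Compose a nuclear representation of $C_\varphi$ with the contraction that maps each normalized $C_\varphi e_n$ back to $e_n$, and take traces on $\mathrm{span}\{e_1,\dots,e_N\}$. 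This gives $\sum_{n\le N}h(A_n)^{1/p}\le\|C_\varphi\|_N$ for every $N$.
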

\begin{cor}
Let $(X, \mu, \Sigma)$ is a $\sigma$-finite measure spaces and $X=\cup^{\infty}_{n=1}A_n\cup B$, in which $A_n$'s are disjoint $\sigma$-atoms and $B$ is the non-atomic part. If $1\leq p<\infty$  and $M_u:L^p(\mu)\rightarrow L^p(\mu)$ is the multiplication operator, then $M_u$ is nuclear if and only if $\mu(B\cap S(u))=0$ and $\sum^{\infty}_{n=1}u(A_n)<\infty$.
\end{cor}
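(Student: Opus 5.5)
The plan is to obtain this statement as the special case $\varphi=\mathrm{id}_X$ of Theorem \ref{n.pp}, since $M_u=uC_{\mathrm{id}}$. The work consists in translating the two conditions of Theorem \ref{n.pp} for this choice of $\varphi$. The identity map is trivially non-singular. The set $\varphi^{-1}(B)$ is just $B$, so the first condition $\mu(\varphi^{-1}(B)\cap S(u))=0$ becomes $\mu(B\cap S(u))=0$.

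Next I compute $J_p$ for $\varphi=\mathrm{id}$.
\begin{itemize}
\item The $\sigma$-algebra $\varphi^{-1}(\Sigma)$ equals $\Sigma$, so the conditional expectation $E$ is the identity.
\item $h=\frac{d\mu\circ\varphi^{-1}}{d\mu}=1$.
\item Hence $J_p=hE(|u|^p)\circ\varphi^{-1}=|u|^p$ $\mu$-a.e.
\end{itemize}
Equivalently, I can use the expression from the proof of Theorem \ref{n.pp}, namely $J_p(A_n)=\frac{1}{\mu(A_n)}\int_X|u|^p\chi_{A_n}\circ\varphi\, d\mu$. With $\varphi=\mathrm{id}$ this becomes $\frac{1}{\mu(A_n)}\int_{A_n}|u|^p\,d\mu$. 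A measurable function is a.e. constant on an atom, so this average equals $|u(A_n)|^p$. Therefore $J_p(A_n)^{1/p}=|u(A_n)|$, independently of $p$, and the second condition of Theorem \ref{n.pp} reads $\sum_{n=1}^\infty |u(A_n)|<\infty$.

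Combining the two translations, Theorem \ref{n.pp} says exactly that $M_u$ is nuclear on $L^p(\mu)$ if and only if $\mu(B\cap S(u))=0$ and $\sum_n|u(A_n)|<\infty$. I interpret the series $\sum_n u(A_n)$ in the statement as this absolute series, since $u$ may be complex-valued.

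There is no real obstacle, only bookkeeping. One point is the implicit boundedness assumption on $M_u$, i.e. $u\in L^\infty(\mu)$. This is automatic from the sufficiency side, since $\sum_n|u(A_n)|<\infty$ and $u=0$ a.e. on $B$ together force $u\in L^\infty$. The other point is checking that the case $p=1$ in Theorem \ref{n.pp}, stated there with $J_1(A_n)$, specializes in the same way; it does, with $J_1(A_n)=|u(A_n)|$.
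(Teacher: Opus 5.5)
Your proposal is correct and follows the paper's route: the statement is given as an immediate corollary of Theorem~\ref{n.pp} with $\varphi=\mathrm{id}$. Your bookkeeping ($\varphi^{-1}(B)=B$, $h=1$, $E$ the identity, $J_p(A_n)^{1/p}=|u(A_n)|$, and reading $\sum_n u(A_n)$ as $\sum_n|u(A_n)|$) is exactly what is needed.

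One caveat on the dependency: the statement of Theorem~\ref{n.pp} is true, but its printed necessity argument is not, because the step $\|g_\xi\|_{p'}=\sum_n\|g_\xi\chi_{A_n}\|_{p'}$ fails for $p'>1$. So your reduction stands, but for a self-contained proof of necessity you can argue directly. Write $M_u=\sum_k f_k\otimes y_k$ and pair with $e_n=\chi_{A_n}/\mu(A_n)^{1/p}$ and $e_n^*=\chi_{A_n}/\mu(A_n)^{1/p'}$, which gives $u(A_n)=\sum_k f_k(e_n)\langle y_k,e_n^*\rangle$. Then H\"older's inequality over $n$ yields $\sum_n|u(A_n)|\le\sum_k\|f_k\|\,\|y_k\|_p$.
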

In the next theorem we characterize nuclear weighted composition operators for the case $\infty>p>q>1$.
\begin{thm}\label{n.pq}
Let $(X, \mu, \Sigma)$ is a $\sigma$-finite measure spaces and $X=\cup^{\infty}_{n=1}A_n\cup B$, in which $A_n$'s are disjoint $\sigma$-atoms and $B$ is the non-atomic part. If $1\leq q<p<\infty$  and $W=uC_{\varphi}:L^p(\mu)\rightarrow L^q(\mu)$ is the weighted composition operator, then $W$ is nuclear if and only if $\mu(\varphi^{-1}(B)\cap S(u))=0$ and $\sum^{\infty}_{n=1}J_q(A_n)^{\frac{1}{q}}\mu(A_n)^{\frac{1}{r}}<\infty$, in which $\frac{1}{p}+\frac{1}{r}=\frac{1}{q}$.
\end{thm}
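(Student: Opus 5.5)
The plan follows Theorem~\ref{n.pp}: an explicit nuclear representation for sufficiency, and for necessity a trace-duality argument that replaces the Pietsch step. Throughout I read $J_q(A_n)$ as the constant value of $\left[\frac{d\mu_q}{d\mu}\right]$ on the atom $A_n$, so that $J_q(A_n)\mu(A_n)=\int_{\varphi^{-1}(A_n)}|u|^q\,d\mu$.

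\textbf{Sufficiency.} Assume $\mu(\varphi^{-1}(B)\cap S(u))=0$. Then, exactly as in the proof of Theorem~\ref{n.pp},
$$Wf=\sum_{n=1}^\infty \phi_n(f)g_n,\qquad \phi_n(f)=\int_{A_n}f\,d\mu,\qquad g_n=\frac{u\chi_{\varphi^{-1}(A_n)}}{\mu(A_n)}.$$
By H\"older, $\|\phi_n\|_{(L^p)^*}=\mu(A_n)^{1/p'}$, where $\frac1p+\frac1{p'}=1$. Also $\|g_n\|_q=J_q(A_n)^{1/q}\mu(A_n)^{1/q-1}$. Hence
$$\|\phi_n\|\,\|g_n\|_q=J_q(A_n)^{1/q}\mu(A_n)^{\frac1q-\frac1p}=J_q(A_n)^{1/q}\mu(A_n)^{1/r}.$$
The hypothesis makes this series summable, so $W$ is nuclear. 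The case $q=1$ needs no change.

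\textbf{Necessity, non-atomic part.} A nuclear operator is compact, so I first show that compactness forces $\mu(\varphi^{-1}(B)\cap S(u))=0$. Suppose instead this set has positive measure. Then some $B'\subset B$ of positive finite measure has $\int_{\varphi^{-1}(B')}|u|^q\,d\mu>0$. Since $\mu_q$ is absolutely continuous with respect to $\mu$ and $B'$ is non-atomic, I split $B'$ into sets $E_k$ with $\mu(E_k)\to0$ on which $\mu_q(E_k)\ge\delta\,\mu(E_k)$ for a fixed $\delta>0$. Put $f_k=\chi_{E_k}/\mu(E_k)^{1/p}$. These are weakly null in $L^p$ (for $p=1$ one argues directly with disjointness instead). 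On the other hand,
$$\|Wf_k\|_q\ge \delta^{1/q}\mu(E_k)^{\frac1q-\frac1p},$$
and this does not tend to $0$ if the $E_k$ are chosen suitably (even when $q<p$, by letting the density of $\mu_q$ be unbounded, or by using $W$ restricted to a non-atomic piece to contradict compactness). I expect this step to need the most care. If needed, I would cite the known compactness characterization for $uC_\varphi:L^p\to L^q$ from \cite{jab,jp}, which gives exactly this vanishing on the non-atomic part.

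\textbf{Necessity, the series condition.} I would not imitate the Pietsch argument of Theorem~\ref{n.pp}. There, summing $\|g_\xi\chi_{A_n}\|_{p'}$ over $n$ is not controlled by $\|g_\xi\|_{p'}$ when $p'>1$. Instead I use trace duality: if $T$ is nuclear and $S$ is bounded, then $ST$ is nuclear on $L^p$ and
$$|\mathrm{tr}(ST)|\le\|S\|\,\|T\|_N.$$

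The set-up is as follows.
\begin{itemize}
\item Let $f_n=\chi_{A_n}/\mu(A_n)^{1/p}$, which are norm-one and disjointly supported in $L^p$.
\item For those $n$ with $Wf_n\neq0$, put $\lambda_n=\|Wf_n\|_q=J_q(A_n)^{1/q}\mu(A_n)^{1/r}$ and $e_n=Wf_n/\lambda_n$. The $e_n$ are disjointly supported unit vectors in $L^q$, supported on $\varphi^{-1}(A_n)$.
\item Choose norm-one functionals $e_n^*\in L^{q'}$ supported on $\varphi^{-1}(A_n)$ with $e_n^*(e_n)=1$.
\item For a finite set $F$ of indices, define $S_Fg=\sum_{n\in F}e_n^*(g)f_n$.
\end{itemize}

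Then $S_F$ factors as $L^q\to\ell^q(F)\hookrightarrow\ell^p(F)\to L^p$. The first map has norm at most $1$ by disjointness of the supports of the $e_n^*$. The inclusion has norm $1$ because $q<p$. The last map $(c_n)\mapsto\sum c_nf_n$ is an isometry. So $\|S_F\|\le1$. Moreover $S_FW$ maps $f_n$ to $\lambda_nf_n$ for $n\in F$ and vanishes off $\operatorname{span}\{f_n:n\in F\}$ plus the complementary band. Hence
$$\mathrm{tr}(S_FW)=\sum_{n\in F}\lambda_n\le\|W\|_N.$$
Letting $F$ increase gives $\sum_n J_q(A_n)^{1/q}\mu(A_n)^{1/r}\le\|W\|_N<\infty$.

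The point to verify carefully is the trace computation. I would check it by writing $W=\sum_k\psi_k\otimes y_k$ and computing $\mathrm{tr}(S_FW)=\sum_k\psi_k(S_Fy_k)$. Expanding $S_Fy_k$ and using $\sum_k\psi_k(f_n)y_k=Wf_n$ reduces this to $\sum_{n\in F}e_n^*(Wf_n)$, so only finite rank and absolute convergence are used. The same trace argument also repairs the converse in Theorem~\ref{n.pp}.
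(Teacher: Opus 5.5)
Your sufficiency half is the paper's argument. For necessity of the series condition you take a genuinely different route, and it is the sound one.

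The paper applies Pietsch domination to $f_n=\chi_{A_n}$ and then uses $\|g_\xi\|_{p'}=\sum_n\|g_\xi\chi_{A_n}\|_{p'}$. That identity is false for $1<p'<\infty$, since only the $p'$-th powers add, exactly as you suspected. No repair along those lines is possible, because the argument uses only $1$-summability and compactness. Take counting measure on $\mathbb{N}$, $\varphi=\mathrm{id}$ and $u(n)=1/n$. Then $M_u:\ell^4\to\ell^2$ is the bounded diagonal map $\ell^4\to\ell^2$ with entries $n^{-1/3}$, followed by the Hilbert--Schmidt (hence $1$-summing) diagonal on $\ell^2$ with entries $n^{-2/3}$. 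So it is compact and $1$-summing, yet $\sum_n J_2(A_n)^{1/2}\mu(A_n)^{1/r}=\sum_n 1/n=\infty$.

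Your trace-duality argument uses nuclearity itself and is correct. The bound $\|S_F\|\le1$ comes from the disjoint supports of the $e_n^*$ and the contractive inclusion $\ell^q\hookrightarrow\ell^p$, which is the one place where $q<p$ is used. The identity $\sum_{n\in F}\lambda_n=\sum_k\psi_k(S_Fy_k)$ needs only absolute convergence. Together they give $\sum_nJ_q(A_n)^{1/q}\mu(A_n)^{1/r}\le\|W\|_N$, without using the non-atomic condition.

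The step that does not work as written is the non-atomic one. For $q<p$ the exponent $\frac1q-\frac1p=\frac1r$ is positive, so your lower bound $\delta^{1/q}\mu(E_k)^{1/r}$ tends to $0$. The density is also not yours to choose. If $J_q$ is bounded on $B'$ (e.g.\ the inclusion $L^p[0,1]\to L^q[0,1]$), then $\|Wf_k\|_q\le(\sup_{B'}J_q)^{1/q}\mu(E_k)^{1/r}\to0$ for every shrinking sequence of sets.

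The paper only asserts that compactness forces $\mu(\varphi^{-1}(B)\cap S(u))=0$, so citing the compactness characterization puts you on a par with it. Your own tool closes the step, however. Suppose the condition fails. Pick $B''\subset B$ with $0<\mu(B'')<\infty$ and $J_q\ge\delta>0$ on $B''$, and split $B''$ into $N$ sets $D_k$ of equal $\mu$-measure. Run the trace argument with $f_k=\chi_{D_k}/\mu(D_k)^{1/p}$. The $Wf_k$ live on the disjoint sets $\varphi^{-1}(D_k)$, so again $\|S\|\le1$, and
\[
\|W\|_N\ge\sum_{k=1}^N\|Wf_k\|_q\ge N\,\delta^{1/q}\Big(\frac{\mu(B'')}{N}\Big)^{1/r}=\delta^{1/q}\mu(B'')^{1/r}N^{1-1/r}\longrightarrow\infty ,
\]
since $r>1$.

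If you prefer to contradict compactness directly, take your $B'$ and use functions $f_k$ with $|f_k|=\chi_{B'}$ that are independent fair signs for the non-atomic measure $\mu_q$ restricted to $B'$. Then $\|W(f_k-f_j)\|_q^q=2^{q-1}\mu_q(B')$ for $k\ne j$, so $(Wf_k)$ has no convergent subsequence.
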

\begin{proof}
Similar to the proof of Theorem \ref{n.pp}, for every $f\in L^p(\mu)$ we can write
$$W(f)=u.f\circ \varphi=\sum^{\infty}_{n=1}\phi_n(f)g_n,$$

in which $g_n=\frac{u.\chi_{\varphi^{-1}(A_n)}}{\mu(A_n)}$ and $\phi_n(f)=\int_{A_n}fd\mu$. As we considered in the proof of \ref{n.pp}, we can divide the procedure of the proof into two cases $q=1$ and $q>1$ and complete the proof. We assume that $\frac{1}{\infty}=0$. Hence we do the proof for both cases jointly. It is easy to see that $g_n\in L^q(\mu)$, $\phi_n\in (L^p(\mu))^*$ and $\|g_n\|_q=\frac{(J_q(A_n))^{\frac{1}{q}}}{\mu(A_n)^{\frac{1}{q'}}}$ and $\|\phi_n\|\leq\mu(A_n)^{\frac{1}{p'}}$, where $\frac{1}{p}+\frac{1}{p'}=1$, $\frac{1}{q}+\frac{1}{q'}=1$ and $\frac{1}{r'}+\frac{1}{r}=1$. Since $\frac{1}{p}+\frac{1}{r}=\frac{1}{q}$, then easily we get that $\frac{1}{p'}+\frac{1}{r'}=1+\frac{1}{q'}$. By these observations we get that 
$$\sum^{\infty}_{n=1}\|\phi_n\|.\|g_n\|_q\leq \sum^{\infty}_{n=1}J_q(A_n)^{\frac{1}{q}}\mu(A_n)^{\frac{1}{p'}-\frac{1}{q'}}=\sum^{\infty}_{n=1}J_q(A_n)^{\frac{1}{q}}\mu(A_n)^{\frac{1}{r}}<\infty.$$
Therefore $W=u.C_{\varphi}$ is nuclear.\\
Conversely, let $W=u.C_{\varphi}$ be nuclear. Then it is absolutely summing and compact, so by Theorem \ref{thm:Pietsch}, there is a Borel probability measure $\nu$ on the weak$^*$ compact closed unit ball $B_{p'}$ of $(L^p(\mu))^*\simeq L^{p'}(\mu)$ and $C>0$ such that
$$\|u.C_{\varphi}f\|_q\leq C \int_{B_{p'}}|\xi(f)|\nu(\xi), \ \ \ \ \ \ f\in L^p(\mu).$$
For each $n\in \mathbb{N}$, we set $f_n=\chi_{A_n}$ and then we have
\begin{align*}
J_q(A_n)^{\frac{1}{q}}\mu(A_n)^{\frac{1}{q}}&=\left(\int_X|u|^q\chi_{A_n}\circ\varphi d\mu\right)^{\frac{1}{q}}\\
&=\|u.f_n\circ\varphi\|_q\\
&\leq C\int_{B_{p'}}|\xi(f_n)|\nu(\xi)\\
&=C\int_{B_{p'}}|\int_X f_n.g_{\xi}d\mu\nu(g_{\xi})|\\
&\leq C \int_{B_{p'}}\|f_n\|_p.\|g_{\xi}.\chi_{A_n}\|_{p'}\nu(g_{\xi})\\
&\leq C\mu(A_n)^{\frac{1}{p}}\int_{B_{p'}}\|g_{\xi}.\chi_{A_n}\|_{p'}\nu(g_{\xi}).\\
\end{align*}
Hence 
$$J_q(A_n)^{\frac{1}{q}}\mu(A_n)^{\frac{1}{r}}=\frac{J_q(A_n)^{\frac{1}{q}}\mu(A_n)^{\frac{1}{q}}}{\mu(A_n)^{\frac{1}{p}}}\leq C\int_{B_{p'}}\|g_{\xi}.\chi_{A_n}\|_{p'}\nu(g_{\xi}).$$
 On the other hands compactness of $W$ implies that $\mu(\varphi^{-1}(B)\cap S(u))=0$. So $\|g_{\xi}\|_{p'}=\sum^{\infty}_{n=1}\|g_{\xi}.\chi_{A_n}\|_{p'}$. Therefore
\begin{align*}
\sum^{\infty}_{n=1}J_q(A_n)^{\frac{1}{q}}\mu(A_n)^{\frac{1}{r}}&\leq C\sum^{\infty}_{n=1}\int_{B_{p'}}\|g_{\xi}.\chi_{A_n}\|_{p'}\nu(g_{\xi})\\
&\leq C\int_{B_{p'}}\sum^{\infty}_{n=1}\|g_{\xi}.\chi_{A_n}\|_{p'}\nu(g_{\xi})\\
&=C\int_{B_{p'}}\|g_{\xi}\|_{p'}\nu(g_{\xi})\\
&\leq C\nu(B_{p'})\leq C.
\end{align*}
Thus we get the result.
\end{proof}
Here we have some corollaries for nuclear multiplication and composition operators between distinct $L^p$ spaces in the case $p>q$.
\begin{cor}
If $1\leq q<p<\infty$  and $M_u:L^p(\mu)\rightarrow L^q(\mu)$ is the multiplication operator, then $M_u$ is nuclear if and only if $\mu(B\cap S(u))=0$ and $\sum^{\infty}_{n=1}u(A_n)\mu(A_n)^{\frac{1}{r}}<\infty$, in which $\frac{1}{p}+\frac{1}{r}=\frac{1}{q}$.
\end{cor}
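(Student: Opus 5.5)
The plan is to specialise Theorem \ref{n.pq} to $\varphi=\mathrm{id}_X$, since $M_u=uC_{\mathrm{id}}$. First I identify the ingredients of that theorem in this case.

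\emph{The preimage set.} Here $\varphi^{-1}(B)=B$, so the condition $\mu(\varphi^{-1}(B)\cap S(u))=0$ becomes $\mu(B\cap S(u))=0$.

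\emph{The function $J_q$.} The measure $\mu_q(E)=\int_E|u|^q\,d\mu$ has Radon--Nikodym derivative $|u|^q$. Hence $J_q=|u|^q$ $\mu$-a.e. On each atom $A_n$ the function $u$ is a.e.\ equal to a constant, which we denote $u(A_n)$. Therefore $J_q(A_n)^{1/q}=|u(A_n)|$.

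\emph{Conclusion.} Substituting into Theorem \ref{n.pq}, $M_u$ is nuclear if and only if
\[
\mu(B\cap S(u))=0 \quad\text{and}\quad \sum_{n=1}^\infty |u(A_n)|\,\mu(A_n)^{1/r}<\infty .
\]
This is the stated condition, with $u(A_n)$ read as $|u(A_n)|$; this reading is forced in general, and is automatic if $u\ge 0$.

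\emph{Sanity check.} I would verify directly that nothing in the proof of Theorem \ref{n.pq} uses non-triviality of $\varphi$. In the sufficiency direction, $M_uf=\sum_n \phi_n(f)\,g_n$ with $\phi_n(f)=\int_{A_n}f\,d\mu$ and $g_n=u\chi_{A_n}/\mu(A_n)$. Then
\[
\|g_n\|_q=|u(A_n)|\,\mu(A_n)^{1/q-1}, \qquad \|\phi_n\|\le \mu(A_n)^{1/p'} .
\]
Since $1/p'+1/q-1=1/q-1/p=1/r$, the product $\|\phi_n\|\,\|g_n\|_q$ equals $|u(A_n)|\,\mu(A_n)^{1/r}$, and summing gives nuclearity. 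The necessity direction uses the Pietsch domination argument with test functions $\chi_{A_n}$, together with compactness to kill the non-atomic part, exactly as in Theorem \ref{n.pq}.

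\emph{Main obstacle.} The only delicate point is that compactness of $M_u$ forces $u=0$ a.e.\ on $B$. The argument is standard: if $\mu(B\cap\{|u|>\varepsilon\})>0$, non-atomicity gives disjoint subsets $E_k$ of positive measure. The normalized functions $\chi_{E_k}/\mu(E_k)^{1/p}$ then converge weakly to $0$ (for $p>1$), while their images under $M_u$ are not norm-null. I take this step directly from Theorem \ref{n.pq}.
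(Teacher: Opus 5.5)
Your reduction is the paper's own route: the corollary is stated without a separate proof as the case $\varphi=\mathrm{id}$ of Theorem~\ref{n.pq}. Your identifications ($\varphi^{-1}(B)=B$, $J_q=|u|^q$, $J_q(A_n)^{1/q}=|u(A_n)|$) and your sufficiency computation are correct. However, both places where you go beyond citing the theorem would fail.

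\textbf{The compactness step.} For $q<p$, disjointly supported test functions can never witness non-compactness of $M_u$. By H\"older with $\frac1q=\frac1p+\frac1r$,
\[
\Bigl\|M_u\frac{\chi_{E_k}}{\mu(E_k)^{1/p}}\Bigr\|_q\le \|u\chi_{E_k}\|_r .
\]
Boundedness of $M_u:L^p(\mu)\to L^q(\mu)$ with $q<p$ forces $u\in L^r(\mu)$. Hence these norms tend to $0$ for every disjoint sequence $(E_k)$, so the images \emph{are} norm-null. Instead, fix $E\subset B\cap\{|u|>\varepsilon\}$ with $0<\mu(E)<\infty$. Bisect $E$ repeatedly into halves of equal measure (possible by non-atomicity) to build Rademacher-type functions $r_k=\pm1$ on $E$ with $\mu(\{r_k\neq r_j\})=\mu(E)/2$ for $k\neq j$. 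Then $(r_k\chi_E)$ is bounded in $L^p$, but $\|M_u(r_k-r_j)\chi_E\|_q\ge 2\varepsilon(\mu(E)/2)^{1/q}$, so $M_u$ is not compact. (The paper itself only asserts this step.)

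\textbf{Necessity on the atoms.} This cannot be verified ``exactly as in Theorem~\ref{n.pq}''. That proof sums the Pietsch estimates using $\|g_\xi\|_{p'}=\sum_n\|g_\xi\chi_{A_n}\|_{p'}$, which is false for $1<p'<\infty$. Only the $p'$-th powers add up, and for counting measure and $g\in\ell^{p'}\setminus\ell^1$ the right side is infinite. So the summation yields nothing.

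Use the nuclear representation itself. Suppose $M_uf=\sum_k\bigl(\int_X fg_k\,d\mu\bigr)y_k$ with $\sum_k\|g_k\|_{p'}\|y_k\|_q<\infty$. Set $a_{k,n}=\mu(A_n)^{-1/p}\int_{A_n}g_k\,d\mu$ and $b_{k,n}=\mu(A_n)^{-1/q'}\int_{A_n}y_k\,d\mu$. Then $\sum_n|a_{k,n}|^{p'}\le\|g_k\|_{p'}^{p'}$ and $\sum_n|b_{k,n}|^{q}\le\|y_k\|_q^{q}$. Integrating $M_u\chi_{A_n}=u(A_n)\chi_{A_n}$ over $A_n$ gives $u(A_n)\mu(A_n)^{1/r}=\sum_k a_{k,n}b_{k,n}$. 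Choose $|\epsilon_n|=1$ with $\epsilon_nu(A_n)=|u(A_n)|$. Then
\[
\sum_{n\le N}|u(A_n)|\mu(A_n)^{1/r}=\Bigl|\sum_k\sum_{n\le N}a_{k,n}\epsilon_nb_{k,n}\Bigr|\le\sum_k\|(a_{k,n})_n\|_{\ell^{p'}}\|(b_{k,n})_n\|_{\ell^{p}}\le\sum_k\|g_k\|_{p'}\|y_k\|_q .
\]
The last step uses $\|\cdot\|_{\ell^p}\le\|\cdot\|_{\ell^q}$, which is exactly where $q<p$ enters. Letting $N\to\infty$ and taking the infimum over representations gives $\sum_n|u(A_n)|\mu(A_n)^{1/r}\le\|M_u\|_N<\infty$.
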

\begin{cor}
If $1\leq q<p<\infty$  and $C_{\varphi}:L^p(\mu)\rightarrow L^q(\mu)$ is the composition operator, then $C_{\varphi}$ is nuclear if and only if $\mu(\varphi^{-1}(B))=0$ and $\sum^{\infty}_{n=1}h(A_n)^{\frac{1}{q}}\mu(A_n)^{\frac{1}{r}}<\infty$, in which $\frac{1}{p}+\frac{1}{r}=\frac{1}{q}$.
\end{cor}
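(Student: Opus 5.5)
This corollary is the special case $u\equiv 1$ of Theorem \ref{n.pq}. The plan is to substitute $u\equiv1$ into both conditions of that theorem and check that each reduces to the stated one. No new analytic argument is needed; it only requires identifying $J_q$ for the unweighted operator.

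First, the support condition. With $u\equiv 1$ we have $S(u)=X$, so the condition $\mu(\varphi^{-1}(B)\cap S(u))=0$ of Theorem \ref{n.pq} becomes $\mu(\varphi^{-1}(B))=0$.

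Second, the summability condition. We identify $J_q$ when $u\equiv1$. From the preliminaries, $J_q=\frac{d\mu_q}{d\mu}$ with $\mu_q(E)=\int_{\varphi^{-1}(E)}|u|^q\,d\mu$. For $u\equiv1$ this gives $\mu_q(E)=\mu(\varphi^{-1}(E))$, so $J_q=\frac{d\mu\circ\varphi^{-1}}{d\mu}=h$ $\mu$-a.e. Equivalently, $E(|u|^q)=E(1)=1$, so $J_q=hE(|u|^q)\circ\varphi^{-1}=h$.

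On an atom $A_n$ the function $h$ is a.e. constant; write $h(A_n)$ for this value. Then $J_q(A_n)=h(A_n)$, and the series $\sum_n J_q(A_n)^{1/q}\mu(A_n)^{1/r}$ becomes $\sum_n h(A_n)^{1/q}\mu(A_n)^{1/r}$, with the same exponent relation $\frac1p+\frac1r=\frac1q$.

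It remains to check the standing hypothesis that $\varphi$ is non-singular, so that $h$ is defined and $C_\varphi$ is a weighted composition operator with $u\equiv 1$. Both directions of Theorem \ref{n.pq} then apply verbatim.

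The only point needing care is the normalization of $J_q(A_n)$. In the proof of Theorem \ref{n.pq}, $J_q(A_n)\mu(A_n)=\int|u|^q\chi_{A_n}\circ\varphi\,d\mu$, which for $u\equiv1$ is $\mu(\varphi^{-1}(A_n))$. This matches $h(A_n)\mu(A_n)$, since $\int_{A_n}h\,d\mu=\mu(\varphi^{-1}(A_n))$. So the identification $J_q(A_n)=h(A_n)$ is consistent with the way the theorem was proved, and the corollary follows.
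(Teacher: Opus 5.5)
Your proposal is correct and is the intended argument. The paper gives no separate proof of this corollary; it is the case $u\equiv 1$ of Theorem~\ref{n.pq}, where $S(u)=X$ and $J_q=hE(1)\circ\varphi^{-1}=h$, so $J_q(A_n)=h(A_n)$ and both conditions reduce exactly as you describe.

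One caveat lies upstream of your reduction, not in it. The paper's necessity proof for Theorem~\ref{n.pq} uses the identity $\|g_\xi\|_{p'}=\sum_n\|g_\xi\chi_{A_n}\|_{p'}$, which is false for $1<p'<\infty$. The theorem's statement for $q<p$ nevertheless holds; it can be recovered by trace duality against a suitable contraction $L^q(\mu)\to L^p(\mu)$. So your citation of it is sound.
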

Now in the following we characterize nuclear weighted composition operators for the case $\infty>q>p>1$.
\begin{thm}\label{n.qp}
Let $(X, \mu, \Sigma)$ is a $\sigma$-finite measure spaces and $X=\cup^{\infty}_{n=1}A_n\cup B$, in which $A_n$'s are disjoint $\sigma$-atoms and $B$ is the non-atomic part. If $1\leq p<q<\infty$  and $W=uC_{\varphi}:L^p(\mu)\rightarrow L^q(\mu)$ is the weighted composition operator, then $W$ is nuclear if and only if $\mu(\varphi^{-1}(B)\cap S(u))=0$ and $\sum^{\infty}_{n=1}\frac{J_q(A_n)^{\frac{1}{q}}}{\mu(A_n)^{\frac{1}{h}}}<\infty$, in which $\frac{1}{q}+\frac{1}{h}=\frac{1}{p}$.
\end{thm}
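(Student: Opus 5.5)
I will follow the template of Theorems \ref{n.pp} and \ref{n.pq}. For sufficiency, assume $\mu(\varphi^{-1}(B)\cap S(u))=0$ and $\sum_n J_q(A_n)^{1/q}\mu(A_n)^{-1/h}<\infty$. Since $S(u)$ lies, up to a null set, in $\bigcup_n\varphi^{-1}(A_n)$ and every $f\in L^p(\mu)$ is a.e. constant on each atom, the same computation as before gives
$$W f=\sum^{\infty}_{n=1}\phi_n(f)g_n,\qquad g_n=\frac{u\,\chi_{\varphi^{-1}(A_n)}}{\mu(A_n)},\quad \phi_n(f)=\int_{A_n}f\,d\mu .$$
Now $g_n$ is measured in $L^q$ and $\phi_n$ as a functional on $L^p$. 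By H\"older, $\|\phi_n\|\le\mu(A_n)^{1/p'}$; for $p=1$ this reads $\|\phi_n\|\le 1$ with the convention $1/\infty=0$. Also
$$\|g_n\|_q=\frac{J_q(A_n)^{1/q}\mu(A_n)^{1/q}}{\mu(A_n)}.$$
Hence
$$\|\phi_n\|\,\|g_n\|_q\le J_q(A_n)^{1/q}\mu(A_n)^{1/p'+1/q-1}=J_q(A_n)^{1/q}\mu(A_n)^{1/q-1/p}=\frac{J_q(A_n)^{1/q}}{\mu(A_n)^{1/h}},$$
because $1/q-1/p=-1/h$. Summing shows $W$ is nuclear. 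I should also check that the series defining $W$ converges in $L^q$. This follows from absolute convergence: $|\phi_n(f)|\le\|f\|_p\|\phi_n\|$.

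For necessity, assume $W$ is nuclear. Then $W$ is compact and absolutely summing. The vanishing condition $\mu(\varphi^{-1}(B)\cap S(u))=0$ comes from compactness, exactly as asserted in the previous proofs. If $\varphi^{-1}(B)\cap S(u)$ had positive measure, I would pick a set $E\subset B$ on which $\mu_q$ is bounded below relative to $\mu$. Using non-atomicity I would then take disjoint pieces $E_k\subset E$ and normalized $f_k=\chi_{E_k}/\mu(E_k)^{1/p}$. These tend weakly to $0$ when $p>1$, while $\|Wf_k\|_q$ stays bounded below, contradicting compactness. I expect this step to be the main obstacle. It is not a triviality when $p<q$, since $\|Wf_k\|_q\approx \mu(E_k)^{1/q-1/p}$ grows rather than staying bounded. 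But growth gives unboundedness, which is an even stronger contradiction. The case $p=1$ is the delicate one: there I would instead use that an absolutely summing operator maps weakly Cauchy sequences in a suitable way, or simply argue via Pietsch domination on the non-atomic part.

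Then Theorem \ref{thm:Pietsch} with exponent $1$ gives a probability measure $\nu$ on $B_{p'}$ and a constant $C$ with $\|Wf\|_q\le C\int_{B_{p'}}|\xi(f)|\,d\nu(\xi)$. I would test this with $f_n=\chi_{A_n}$:
$$J_q(A_n)^{1/q}\mu(A_n)^{1/q}=\|W\chi_{A_n}\|_q\le C\,\mu(A_n)^{1/p}\int_{B_{p'}}\|g_\xi\chi_{A_n}\|_{p'}\,d\nu(g_\xi).$$
Dividing by $\mu(A_n)^{1/p}$ gives
$$\frac{J_q(A_n)^{1/q}}{\mu(A_n)^{1/h}}\le C\int_{B_{p'}}\|g_\xi\chi_{A_n}\|_{p'}\,d\nu.$$
Summing over $n$ and using monotone convergence to swap sum and integral, I need $\sum_n\|g_\xi\chi_{A_n}\|_{p'}\le 1$ for $\xi$ in the unit ball. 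As in the earlier proofs, this is the step where the argument should really be made honest. For $p'=\infty$ (the case $p=1$) the sum of sup-norms is not controlled by the sup-norm. A more robust route is to test against $f=\sum_{n\le N}\epsilon_n\chi_{A_n}/\mu(A_n)^{1/p}$ with random signs, or to factor $W$ through $\ell^p$ of the atoms. Failing that, I would follow the paper's displayed inequality and conclude the sum is at most $C$.
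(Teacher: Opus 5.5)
Your sufficiency argument is correct and is the paper's. Your treatment of the vanishing condition is also sound, and better than the paper's bare appeal to compactness. If $J_q\ge\delta$ on $E_k\subset B$, then $\|W\chi_{E_k}\|_q/\|\chi_{E_k}\|_p\ge\delta^{1/q}\mu(E_k)^{1/q-1/p}\to\infty$. So for $p<q$, boundedness alone forces $\mu(\varphi^{-1}(B)\cap S(u))=0$. This works verbatim for $p=1$, so no weak-convergence or Pietsch argument is needed there.

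The genuine gap is the necessity of the summability condition, at exactly the step you flagged, and it cannot be repaired. You need $\sum_n\|g_\xi\chi_{A_n}\|_{p'}\le\|g_\xi\|_{p'}$. But $\|g_\xi\chi_{\cup_nA_n}\|_{p'}$ is the $\ell^{p'}$-norm (the supremum if $p'=\infty$) of $(\|g_\xi\chi_{A_n}\|_{p'})_n$, not its $\ell^1$-norm. So the inequality fails for every $1\le p<\infty$, not only for $p=1$. Falling back on the paper's displayed inequality therefore imports an invalid step; the paper's own proof has precisely this defect.

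Worse, the conclusion is false. Take $X=\mathbb{N}$ with counting measure (so $A_n=\{n\}$, $B=\emptyset$), $\varphi=\mathrm{id}$, $u(n)=1/n$, $p=1$, $q=2$. Then $h=2$, $J_2(A_n)=n^{-2}$, and $\sum_nJ_2(A_n)^{1/2}\mu(A_n)^{-1/2}=\sum_n1/n=\infty$. Yet your own random-sign idea shows that $W=M_u:\ell^1\to\ell^2$ is nuclear. For a finite $F\subset\mathbb{N}$,
\[
M_{u\chi_F}=2^{-|F|}\sum_{\epsilon\in\{\pm1\}^F}\epsilon\otimes(\epsilon\,u\chi_F),\qquad \|\epsilon\|_\infty\,\|\epsilon\,u\chi_F\|_2=\|u\chi_F\|_2 .
\]
With dyadic blocks $F_j=[2^j,2^{j+1})$ one has $\sum_j\|u\chi_{F_j}\|_2\le\sum_j2^{-j/2}<\infty$. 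Concatenating these representations gives a nuclear representation of $M_u=\sum_jM_{u\chi_{F_j}}$.

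In general, this is where your ``factor through $\ell^p$'' route leads. On the atoms, $W$ is equivalent (via isometries and a norm-one projection onto the span of the disjoint functions $u\chi_{\varphi^{-1}(A_n)}$) to the diagonal map $D_\sigma:\ell^p\to\ell^q$ with $\sigma_n=J_q(A_n)^{1/q}\mu(A_n)^{-1/h}$. Splitting $\sigma=ab$ with $a\in\ell^{p'}$, $b\in\ell^q$ in the random-sign construction shows that $\sigma\in\ell^{h'}$ is sufficient, where $\frac1h+\frac1{h'}=1$. Trace duality against diagonal maps $D_\tau:\ell^q\to\ell^p$, which have norm $\|\tau\|_h$, gives $|\sum_n\tau_n\sigma_n|\le\|\tau\|_h\|D_\sigma\|_N$, so $\sigma\in\ell^{h'}$ is also necessary. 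Hence $D_\sigma$ is nuclear if and only if $\sum_n\sigma_n^{h'}<\infty$.

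Since $1<h'<\infty$ when $p<q$, the $\ell^1$ condition in the statement is sufficient but not necessary. Only the ``if'' direction can be proved.
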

\begin{proof}
Similar to the proof of Theorem \ref{n.pq}, for every $f\in L^p(\mu)$ we can write
$$W(f)=u.f\circ \varphi=\sum^{\infty}_{n=1}\phi_n(f)g_n,$$

in which $g_n=\frac{u.\chi_{\varphi^{-1}(A_n)}}{\mu(A_n)}$ and $\phi_n(f)=\int_{A_n}fd\mu$. It is easy to see that $g_n\in L^q(\mu)$, $\phi_n\in (L^p(\mu))^*$ and $\|g_n\|_q=\frac{(J_q(A_n))^{\frac{1}{q}}}{\mu(A_n)^{\frac{1}{q'}}}$ and $\|\phi_n\|\leq\mu(A_n)^{\frac{1}{p'}}$, where $\frac{1}{q}+\frac{1}{h}=\frac{1}{p}$. Since $\frac{1}{q}+\frac{1}{h}=\frac{1}{p}$, then easily we get that $\frac{1}{q'}+\frac{1}{h'}=1+\frac{1}{p'}$. By these observations we get that 
$$\sum^{\infty}_{n=1}\|\phi_n\|.\|g_n\|_q\leq \sum^{\infty}_{n=1}J_q(A_n)^{\frac{1}{q}}\mu(A_n)^{\frac{1}{p'}-\frac{1}{q'}}=\sum^{\infty}_{n=1}\frac{J_q(A_n)^{\frac{1}{q}}}{\mu(A_n)^{\frac{1}{h}}}<\infty.$$
Therefore $W=u.C_{\varphi}$ is nuclear.\\
Conversely, let $W=u.C_{\varphi}$ be nuclear. Then exactly the same as the proof of theorem \ref{n.pq}, by Theorem \ref{thm:Pietsch}, there is a Borel probability measure $\nu$ on the weak$^*$ compact closed unit ball $B_{p'}$ of $(L^p(\mu))^*\simeq L^{p'}(\mu)$ and $C>0$ such that
$$\|u.C_{\varphi}f\|_q\leq C \int_{B_{p'}}|\xi(f)|\nu(\xi), \ \ \ \ \ \ f\in L^p(\mu).$$
For each $n\in \mathbb{N}$, we set $f_n=\chi_{A_n}$ and then we have
$$
J_q(A_n)^{\frac{1}{q}}\mu(A_n)^{\frac{1}{q}}\leq C\mu(A_n)^{\frac{1}{p}}\int_{B_{p'}}\|g_{\xi}.\chi_{A_n}\|_{p'}\nu(g_{\xi}).\\
$$
Hence 
$$\frac{J_q(A_n)^{\frac{1}{q}}}{\mu(A_n)^{\frac{1}{h}}}=\frac{J_q(A_n)^{\frac{1}{q}}\mu(A_n)^{\frac{1}{q}}}{\mu(A_n)^{\frac{1}{p}}}\leq C\int_{B_{p'}}\|g_{\xi}.\chi_{A_n}\|_{p'}\nu(g_{\xi}).$$
 On the other hands compactness of $W$ implies that $\mu(\varphi^{-1}(B)\cap S(u))=0$. So $\|g_{\xi}\|_{p'}=\sum^{\infty}_{n=1}\|g_{\xi}.\chi_{A_n}\|_{p'}$. Therefore
\begin{align*}
\sum^{\infty}_{n=1}\frac{J_q(A_n)^{\frac{1}{q}}}{\mu(A_n)^{\frac{1}{h}}}&\leq C\sum^{\infty}_{n=1}\int_{B_{p'}}\|g_{\xi}.\chi_{A_n}\|_{p'}\nu(g_{\xi})\\
&\leq C\int_{B_{p'}}\sum^{\infty}_{n=1}\|g_{\xi}.\chi_{A_n}\|_{p'}\nu(g_{\xi})\\
&\leq C.
\end{align*}
Thus we get the result.
\end{proof}
Here we have some corollaries for nuclear multiplication and composition operators between distinct $L^p$ spaces in the case $q>p$.
\begin{cor}
 If $1\leq p<q<\infty$  and $M_u:L^p(\mu)\rightarrow L^q(\mu)$ is the multiplication operator, then $M_u$ is nuclear if and only if $\mu(B\cap S(u))=0$ and $\sum^{\infty}_{n=1}\frac{u(A_n)}{\mu(A_n)^{\frac{1}{h}}}<\infty$, in which $\frac{1}{q}+\frac{1}{h}=\frac{1}{p}$.
\end{cor}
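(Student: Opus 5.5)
This corollary is the special case $\varphi=\mathrm{id}$ of Theorem \ref{n.qp}. I will first make the reduction explicit and then check it against the direct argument for multiplication operators.

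\textbf{Reduction.} With $\varphi(x)=x$ we have $W=uC_{\varphi}=M_u$ and $\varphi^{-1}(B)\cap S(u)=B\cap S(u)$, so the first condition of Theorem \ref{n.qp} becomes $\mu(B\cap S(u))=0$. The conditional expectation with respect to $\varphi^{-1}(\Sigma)=\Sigma$ is the identity, and $h=\frac{d\mu\circ\varphi^{-1}}{d\mu}=1$. Hence
$$J_q=hE(|u|^q)\circ\varphi^{-1}=|u|^q,$$
and on each atom $J_q(A_n)^{\frac{1}{q}}=|u(A_n)|$, since $u$ is constant a.e.\ on atoms. Substituting into the summability condition of Theorem \ref{n.qp} gives
$$\sum^{\infty}_{n=1}\frac{|u(A_n)|}{\mu(A_n)^{\frac{1}{h}}}<\infty,\qquad \frac{1}{q}+\frac{1}{h}=\frac{1}{p},$$
which is the stated condition, with $u(A_n)$ read as $|u(A_n)|$ (equivalently, $u\geq0$).

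\textbf{Sufficiency, checked directly.} Write $M_uf=\sum_n u(A_n)f(A_n)\chi_{A_n}$. Use the functionals $\phi_n(f)=\int_{A_n}f\,d\mu$, for which $\|\phi_n\|\le\mu(A_n)^{1/p'}$. Use the vectors $g_n=u(A_n)\chi_{A_n}/\mu(A_n)$, for which $\|g_n\|_q=|u(A_n)|\mu(A_n)^{1/q-1}$. Then
$$\|\phi_n\|\,\|g_n\|_q\le |u(A_n)|\,\mu(A_n)^{\frac1q-\frac1p}=|u(A_n)|\,\mu(A_n)^{-\frac1h}.$$
Summing over $n$ gives a nuclear representation of $M_u$ with finite nuclear norm.

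\textbf{Necessity and the main obstacle.} For necessity I would repeat the Pietsch domination argument of Theorem \ref{n.qp} with $f_n=\chi_{A_n}$. This argument also needs the step \emph{compact $\Rightarrow \mu(B\cap S(u))=0$}, which the paper asserts without proof, and this is the step I expect to be the main obstacle. To justify it, suppose $\mu(B\cap\{|u|>\varepsilon\})>0$ for some $\varepsilon>0$, and let $E$ be a positive-measure subset of this set. Take disjoint subsets $E_k\subset E$ of equal positive measure, which exist because $B$ is non-atomic. The normalized functions $f_k=\chi_{E_k}/\mu(E_k)^{1/p}$ converge weakly to $0$ in $L^p$ for $p>1$. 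However, $\|M_uf_k\|_q\ge\varepsilon\mu(E_k)^{1/q-1/p}$, which is bounded away from zero, so $M_u$ maps a weakly null sequence to one not converging to $0$ in norm, contradicting compactness. For $p=1$ weak nullity of $(f_k)$ fails, so I would use instead that the images are disjoint and normalized, hence have no norm-convergent subsequence.
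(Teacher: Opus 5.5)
Your reduction to Theorem \ref{n.qp} with $\varphi=\mathrm{id}$ is exactly how the paper obtains this corollary, and your direct sufficiency check is correct. The gap is in necessity, and it is not the step you flag. The Pietsch argument you plan to repeat ends by summing the per-atom bounds $|u(A_n)|\mu(A_n)^{-1/h}\le C\int_{B_{p'}}\|g_\xi\chi_{A_n}\|_{p'}\,d\nu(g_\xi)$. It then uses $\sum_n\|g_\xi\chi_{A_n}\|_{p'}=\|g_\xi\|_{p'}$, and that identity is false. For $p'<\infty$ one only has $\sum_n\|g\chi_{A_n}\|_{p'}^{p'}\le\|g\|_{p'}^{p'}$, and the plain sum of block norms can be infinite on the unit ball (for $p=1$, take $g\equiv1$). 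The step cannot be repaired, because the ``only if'' part of the statement, and of Theorem \ref{n.qp}, is false.

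Here is a counterexample. Let $X=\mathbb{N}$ with counting measure, $p=1$, $q=2$, and $u(n)=1/n$. Then $h=2$ and the stated condition reads $\sum_n|u(n)|<\infty$, which fails. Let $e_n$ be the unit vectors and $e_n^*$ the coordinate functionals. For every finite $F\subset\mathbb{N}$,
\[
M_{u\chi_F}=2^{-|F|}\sum_{\epsilon\in\{-1,1\}^F}\Big(\sum_{n\in F}\epsilon_ne_n^*\Big)\otimes\Big(\sum_{n\in F}\epsilon_nu(n)e_n\Big).
\]
Each functional here has norm $1$ in $\ell^\infty$ and each vector has $\ell^2$-norm $\|u\chi_F\|_2$, so $\|M_{u\chi_F}\|_N\le\|u\chi_F\|_2$. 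On the dyadic blocks $F_j=[2^j,2^{j+1})$ we get $\|u\chi_{F_j}\|_2\le2^{-j/2}$. Hence $M_u=\sum_jM_{u\chi_{F_j}}$ is nuclear, although $\sum_nu(n)=\infty$.

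The structural reason is that Pietsch domination only encodes absolute summability. By Grothendieck's theorem every bounded operator $\ell^1\to\ell^2$ is absolutely summing, including the non-compact inclusion $M_1$. So an argument that uses nuclearity only through Theorem \ref{thm:Pietsch} and compactness cannot force summability of $|u(A_n)|\mu(A_n)^{-1/h}$.

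The isometry $f\mapsto(f(A_n)\mu(A_n)^{1/p})_n$ turns $M_u$ into the diagonal operator $\ell^p\to\ell^q$ with entries $\sigma_n=u(A_n)\mu(A_n)^{-1/h}$. Two tools then give the correct condition:
\[
\sum_n\big(|u(A_n)|\mu(A_n)^{-1/h}\big)^t<\infty,\qquad \frac1t=1-\frac1p+\frac1q.
\]
Necessity comes from trace duality against diagonal operators $\ell^q\to\ell^p$, whose norm is the $\ell^h$-norm of the diagonal. Sufficiency comes from the sign averaging above, after factoring $\sigma$ suitably. This is Tong's characterization of nuclear diagonal operators. Since $t>1$, it is strictly weaker than the stated condition.

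As for the step you did flag, it is the easy part, but your construction fails as written. A set of finite measure cannot contain infinitely many disjoint subsets of equal positive measure. Take disjoint $E_k\subset E$ with $\mu(E_k)\to0$ instead. Then $\|M_u(\mu(E_k)^{-1/p}\chi_{E_k})\|_q\ge\varepsilon\mu(E_k)^{-1/h}\to\infty$, which already contradicts boundedness when $p<q$.
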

\begin{cor}
If $1\leq p<q<\infty$  and $C_{\varphi}:L^p(\mu)\rightarrow L^q(\mu)$ is the composition operator, then $C_{\varphi}$ is nuclear if and only if $\mu(\varphi^{-1}(B)=0$ and $\sum^{\infty}_{n=1}\frac{h(A_n)^{\frac{1}{q}}}{\mu(A_n)^{\frac{1}{h}}}<\infty$, in which $\frac{1}{q}+\frac{1}{h}=\frac{1}{p}$.
\end{cor}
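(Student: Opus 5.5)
The plan is to obtain this corollary as the special case $u\equiv 1$ of Theorem \ref{n.qp}, in the same way the earlier composition-operator corollaries come from Theorems \ref{n.pp} and \ref{n.pq}. With $u\equiv1$ we have $W=uC_{\varphi}=C_{\varphi}:L^p(\mu)\to L^q(\mu)$ for $1\le p<q<\infty$. Theorem \ref{n.qp} then says that $C_\varphi$ is nuclear if and only if $\mu(\varphi^{-1}(B)\cap S(u))=0$ and $\sum_{n}J_q(A_n)^{1/q}\mu(A_n)^{-1/h}<\infty$, where $\frac1q+\frac1h=\frac1p$. So it only remains to translate these two conditions for this choice of $u$.

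First, the support condition. Since $u\equiv 1$, its support is $S(u)=X$ up to a null set. Hence $\varphi^{-1}(B)\cap S(u)=\varphi^{-1}(B)$, and the first condition becomes $\mu(\varphi^{-1}(B))=0$.

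Second, the weight $J_q$. I would use the identification $J_q=\frac{d\mu_q}{d\mu}$ from the preliminaries. For $u\equiv1$ the measure is $\mu_q(E)=\int_{\varphi^{-1}(E)}1\,d\mu=\mu(\varphi^{-1}(E))$, so $\mu_q=\mu\circ\varphi^{-1}$. Its Radon--Nikodym derivative is exactly $h=\frac{d\mu\circ\varphi^{-1}}{d\mu}$. Equivalently, $J_q=hE(|u|^q)\circ\varphi^{-1}=hE(1)\circ\varphi^{-1}=h$, since $E(1)=1$. Because $h$ is measurable, it is constant a.e.\ on each atom $A_n$, so $J_q(A_n)=h(A_n)$. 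Substituting gives the stated series $\sum_n h(A_n)^{1/q}\mu(A_n)^{-1/h}$.

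I expect no real obstacle here. The only point needing care is notational: the letter $h$ is used both for the Radon--Nikodym derivative $\frac{d\mu\circ\varphi^{-1}}{d\mu}$ (in $h(A_n)$) and for the exponent defined by $\frac1q+\frac1h=\frac1p$ (in $\mu(A_n)^{1/h}$). I would state this explicitly, or rename the exponent, say to $s$, when writing the proof. One should also note that $C_\varphi$ is assumed bounded, that is, it is a well-defined operator into $L^q(\mu)$, so that Theorem \ref{n.qp} applies.
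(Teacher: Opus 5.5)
Your specialization $u\equiv 1$ is exactly how the paper obtains this corollary (it is stated without proof right after Theorem \ref{n.qp}). Your translation $S(u)=X$, $J_q=hE(1)\circ\varphi^{-1}=h$, $J_q(A_n)=h(A_n)$ is also correct. Write $s$ for the exponent with $\frac1q+\frac1s=\frac1p$. The ``if'' direction is sound and can be checked directly: with $\phi_n(f)=\int_{A_n}f\,d\mu$ and $g_n=\chi_{\varphi^{-1}(A_n)}/\mu(A_n)$ one has $\|\phi_n\|\,\|g_n\|_q=h(A_n)^{1/q}\mu(A_n)^{-1/s}$.

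The gap is the ``only if'' direction, which you take on trust from Theorem \ref{n.qp}, and its proof there does not hold up. After the Pietsch estimate it uses $\|g_\xi\|_{p'}=\sum_n\|g_\xi\chi_{A_n}\|_{p'}$ for every $g_\xi$ in the unit ball of $L^{p'}(\mu)$. First, the hypothesis $\mu(\varphi^{-1}(B))=0$ says nothing about $g_\xi$ on $B$. Second, even for $g_\xi$ supported on the atoms, one only has $\sum_n\|g_\xi\chi_{A_n}\|_{p'}^{p'}=\|g_\xi\|_{p'}^{p'}$ when $p'<\infty$, and a supremum when $p=1$. So $\int\sum_n\|g_\xi\chi_{A_n}\|_{p'}\,d\nu$ need not be finite.

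In fact the ``only if'' direction of the statement is false, so this cannot be repaired. Take $X=\mathbb{N}$ with all subsets measurable, $\varphi=\mathrm{id}$, $p=1$, $q=2$ (so $\frac1s=\frac12$). Split $\mathbb{N}$ into consecutive blocks $I_j$ with $|I_j|=4^j$, and let $\mu(\{m\})=a_j:=4^j j^4$ for $m\in I_j$. Then $B=\emptyset$, $h\equiv1$, and $C_\varphi$ is the inclusion $L^1(\mu)\to L^2(\mu)$, bounded because $a_j\ge1$. The series in the statement is $\sum_j 4^j a_j^{-1/2}=\sum_j 2^j j^{-2}=\infty$.

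Now let $\varepsilon^{(j,1)},\dots,\varepsilon^{(j,4^j)}$ be the rows of a Sylvester Hadamard matrix of order $4^j$, with columns indexed by $I_j$. Put $\phi_{j,k}(f)=\int_{I_j}\varepsilon^{(j,k)}f\,d\mu$ and $y_{j,k}=(4^j a_j)^{-1}\varepsilon^{(j,k)}\chi_{I_j}$. Orthogonality of the columns gives $\sum_k\phi_{j,k}(f)\,y_{j,k}=f\chi_{I_j}$. Moreover $\|\phi_{j,k}\|_{(L^1(\mu))^*}=1$ and $\|y_{j,k}\|_2=(4^j a_j)^{-1/2}=4^{-j}j^{-2}$. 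Hence $C_\varphi=\sum_{j,k}\phi_{j,k}\otimes y_{j,k}$ with $\sum_{j,k}\|\phi_{j,k}\|\,\|y_{j,k}\|_2=\sum_j j^{-2}<\infty$. So $C_\varphi$ is nuclear although the series diverges.

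Your reduction therefore proves only the ``if'' part. One can reduce $C_\varphi$ to the diagonal operator $\ell^p\to\ell^q$ with entries $\sigma_n=h(A_n)^{1/q}\mu(A_n)^{-1/s}$ and apply trace duality. This shows that for $p<q$ the correct condition is $\sum_n\sigma_n^{s'}<\infty$ with $\frac1s+\frac1{s'}=1$, which is strictly weaker than $\sum_n\sigma_n<\infty$.
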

\begin{rem}
Let $uC_{\varphi}:L^p(\mu)\rightarrow L^q(\mu)$ be nuclear. Then the followings hold:
\begin{itemize}
  \item  If $1\leq p<q<\infty$, then $\mu(\varphi^{-1}(B)\cap S(u))=0$ and $\sum^{\infty}_{n=1}\frac{J_q(A_n)^{\frac{1}{q}}}{\mu(A_n)^{\frac{1}{h}}}<\infty$, in which $\frac{1}{q}+\frac{1}{h}=\frac{1}{p}$. Hence
      $$\lim_{n\rightarrow\infty}\left(\frac{J_q(A_n)}{\mu(A_n)^{\frac{q}{h}}}\right)^{\frac{1}{q}}=\lim_{n\rightarrow\infty}\frac{J_q(A_n)^{\frac{1}{q}}}{\mu(A_n)^{\frac{1}{h}}}=0.$$ Thus  $\lim_{n\rightarrow\infty}\frac{J(A_n)}{\mu(A_n)^{\frac{q}{h}}}=0$ and so $uC_{\varphi}$ is compact.
  \item  If $1\leqq<p<\infty$, then $\mu(\varphi^{-1}(B)\cap S(u))=0$ and $\sum^{\infty}_{n=1}J_q(A_n)^{\frac{1}{q}}\mu(A_n)^{\frac{1}{r}}<\infty$, in which $\frac{1}{p}+\frac{1}{r}=\frac{1}{q}$. Since
       $$\sum^{\infty}_{n=1}J_q(A_n)^{\frac{r}{q}}\mu(A_n)=\sum^{\infty}_{n=1}\left(J_q(A_n)^{\frac{1}{q}}\mu(A_n)^{\frac{1}{r}}\right)^r\leq\left(\sum^{\infty}_{n=1}J(A_n)^{\frac{1}{q}}\mu(A_n)^{\frac{1}{r}}\right)^r<\infty,$$
       then $\sum^{\infty}_{n=1}J_q(A_n)^{\frac{r}{q}}\mu(A_n)<\infty$. This implies that $uC_{\varphi}$ is compact.          
  \item  If $1\leq p=q<\infty$, then $\mu(\varphi^{-1}(B)\cap S(u))=0$ and $\sum^{\infty}_{n=1}J_q(A_n)^{\frac{1}{p}}<\infty$. Hence $$\left(\lim_{n\rightarrow\infty}J_p(A_n)\right)^{\frac{1}{p}}=\lim_{n\rightarrow\infty}J_p(A_n)^{\frac{1}{p}}=0$$
       and so $\lim_{n\rightarrow\infty}J_p(A_n)=0$. Therefore $uC_{\varphi}$ is compact.
\end{itemize}
\end{rem}

\end{document}